\theoremstyle{plain}
\newtheorem{theorem}{Theorem}[section]
\newtheorem{corollary}[theorem]{Corollary}
\newtheorem{lemma}[theorem]{Lemma}
\newtheorem{proposition}[theorem]{Proposition}
\theoremstyle{remark}
\newtheorem*{remark}{Remark}
\newcommand{\reel}{\mathbb{R}}
\newcommand{\nat}{\mathbb{N}}
\newcommand{\vf}{\varphi}
\newcommand{\eps}{\varepsilon}
\newcommand{\abs}[1]{\left\vert #1\right\vert }
\newcommand{\N}[1]{\muskip0=-2mu{\left|\mkern\muskip0\left|#1\right|\mkern\muskip0\right|}}
\newcommand{\bg}{\medskip\goodbreak}
\newcommand{\vers}{{\longrightarrow}}
\newenvironment{enumeratea}{\begin{enumerate}%
	[\upshape (a)]}{\end{enumerate}}
\newenvironment{enumeratei}{\begin{enumerate}%
	[\itshape i.]}{\end{enumerate}}
\title[An Inequality for Bounded Functions]
{An Inequality for Bounded Functions}
\author[Omran Kouba]{Omran Kouba$^\dag$}
\address{Department of Mathematics \\
Higher Institute for Applied Sciences and Technology\\
P.O. Box 31983, Damascus, Syria.}
\email{omran\_kouba@hiast.edu.sy}
\keywords{Bounded functions, Convex functions, Inequalities.}
\subjclass[2010]{26B20, 26D25.}
\thanks{$^\dag$ Department of Mathematics, Higher Institute for Applied Sciences and Technology.}
\begin{document}
\parindent=0pt
%\date{\today}
\begin{abstract}
In this note we prove optimal inequalities for bounded functions in terms 
of their deviation from their mean. These results extend and generalize some known
 inequalities due to Thong (2011) and Perfetti (2011). \par
\end{abstract}
\smallskip\goodbreak

\parindent=0pt
\maketitle
%%%%%%%%%%%%%%%%%%%%%%%%
\section{\bf Introduction }\label{sec1}
\bg
\parindent=0pt
\qquad Let $L^\infty([0,1])$ be the space of essentially bounded measurable real functions on $[0,1]$ equipped with
the well-known essential supremum norm $\N{\cdot}_\infty$, and
consider two real numbers $m$ and $M$ such that $m<0<M$.
Let ${\mathcal F}_{m,M}$ denote the closed subset of $L^\infty([0,1])$ consisting of functions
$f:[0,1]\vers\reel$ such that $m\leq f\leq M$ and $\int_0^1f(x)\,dx=0$. 

\begin{equation}
{\mathcal F}_{m,M}=\left\{f\in L^\infty([0,1]): m\leq f\leq M \hbox{ and }\int_0^1f(x)\,dx=0\right\}.
\end{equation}

For $f$ in $L^\infty([0,1])$ we define the continuous function $J(f):[0,1]\vers \reel$ by
\begin{equation}
\forall\,x\in [0,1],\qquad J(f)(x)=\int_0^xf(t)\,dt.
\end{equation}

\qquad In \cite{thong} it was asked to show that for every \textit{continuous} $f$ that belongs to
${\mathcal F}_{m,M}$ one has the following inequality :
\begin{equation}\label{E:eq1}
\abs{\int_0^1xf(x)\,dx}\leq\frac{1}{2}\cdot\frac{-mM}{M-m}
\end{equation}
Noting that for continuous functions $f$ from ${\mathcal F}_{m,M}$ we have
\begin{align*}
\int_0^1xf(x)\,dx&=\int_0^1x(J(f))'(x)\,dx\\
&=\big[xJ(f)(x)\big]_{x=0}^{x=1}-\int_0^1J(f)(x)\,dx\\
&=-\int_0^1J(f)(x)\,dx
\end{align*}
We see that \eqref{E:eq1} would follow from the stronger inequality
\begin{equation}\label{E:eq2}
\int_0^1\abs{J(f)(x)}\,dx\leq\frac{1}{2}\cdot\frac{-mM}{M-m}.
\end{equation}
\qquad Also it was asked in  \cite{perfetti} to prove that for every $f$ in ${\mathcal F}_{m,M}$
one has
\begin{equation}\label{E:eq3}
\int_0^1\big(J(f)(x)\big)^2\,dx\leq \frac{-mM}{6(M-m)^2}(3M^2-8mM+3m^2).
\end{equation}
but in \cite{kou} the following sharper result was proved
\begin{equation}\label{E:eq4}
\left(\int_0^1\big(J(f)(x)\big)^2\,dx\right)^{1/2}\leq \frac{1}{\sqrt{3}}\cdot\frac{-mM}{M-m},
\end{equation}
and the cases of equality were caracterized.\bg
\qquad In this note we will generalize these results to give sharp bounds in terms of
$m$, $M$ and $\vf$ for $\int_0^1\vf\big(\abs{J(f)(x)}\big)\,dx$,
where $\vf$ is an increasing function, and we will caracterize the cases of equality.

As corollaries we will prove that for functions $f$ in
${\mathcal F}_{m,M}$, we have
\begin{align*}
\left(\int_0^1\abs{J(f)(x)}^p\,dx\right)^{1/p}& \leq \frac{1}{\root{p}\of{1+p}}\cdot\frac{-mM}{M-m},\quad \hbox{for $p>0$.}\\
\noalign{and}\\
\exp\left(\int_0^1\log\abs{J(f)(x)}\,dx\right)&\leq \frac{1}{e}\cdot\frac{-mM}{M-m}.
\end{align*}
\bigskip\goodbreak
%%%%%%%%%%%%%%%%%%%%%
\section{\bf The Main Results }\label{sec2}
\bg
Clearly we have the following simple property :
\bg
\begin{proposition}
For every $f\in {\mathcal F}_{m,M}$ we have
$$\N{J(f)}_\infty\leq \frac{-mM}{M-m}.$$
\end{proposition}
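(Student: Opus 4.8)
The plan is to estimate $\abs{J(f)(x)}$ pointwise for each fixed $x\in[0,1]$ and then pass to the supremum. The crucial observation is that $J(f)$ admits two useful expressions: the direct one, $J(f)(x)=\int_0^xf(t)\,dt$, and, because $\int_0^1f(t)\,dt=0$, the complementary one, $J(f)(x)=-\int_x^1f(t)\,dt$. Each of these, combined with the two-sided bound $m\leq f\leq M$, yields a linear estimate in $x$, and the sharp constant emerges by retaining the better of the two estimates on each side.

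First I would fix $x\in[0,1]$ and derive the upper estimates. From the direct expression and $f\leq M$ one gets $J(f)(x)\leq Mx$; from the complementary expression and $f\geq m$ one gets $J(f)(x)=-\int_x^1f(t)\,dt\leq -m(1-x)$. Hence $J(f)(x)\leq\min\{Mx,\,-m(1-x)\}$. Symmetrically, using $f\geq m$ in the direct expression and $f\leq M$ in the complementary one gives $J(f)(x)\geq\max\{mx,\,-M(1-x)\}$.

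Next I would optimise these envelopes over $x$. The two affine functions $x\mapsto Mx$ and $x\mapsto -m(1-x)$ cross at $x_0=\frac{-m}{M-m}$, where both take the common value $\frac{-mM}{M-m}$; since $Mx$ is increasing and $-m(1-x)$ is decreasing, their minimum never exceeds this value, so $J(f)(x)\leq\frac{-mM}{M-m}$ for every $x$. Treating the lower envelope in the same way (the relevant crossing occurring at $x_1=\frac{M}{M-m}$) gives $J(f)(x)\geq\frac{mM}{M-m}=-\frac{-mM}{M-m}$. Combining the two bounds yields $\abs{J(f)(x)}\leq\frac{-mM}{M-m}$ uniformly in $x$, and taking the essential supremum completes the argument.

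I do not expect a genuine obstacle here; the only point requiring a little care is that the hypothesis $m<0<M$ is exactly what guarantees $x_0,x_1\in(0,1)$, so that the crossing points are interior and $\frac{-mM}{M-m}>0$ is the correct nonnegative bound. One should also note that the inequalities $m\leq f\leq M$ hold only almost everywhere, but since $J(f)$ is obtained by integration this causes no difficulty.
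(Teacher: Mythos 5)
Your proof is correct and follows essentially the same route as the paper: the same two linear estimates $J(f)(x)\leq Mx$ and $J(f)(x)\leq -m(1-x)$ (the latter via $\int_0^1 f=0$), compared at the same crossing point $\frac{-m}{M-m}$; your min/max-envelope phrasing is just a repackaging of the paper's split of $[0,1]$ into two intervals at that point. The only cosmetic difference is that the paper obtains the lower bound by applying the upper bound to $-f\in{\mathcal F}_{-M,-m}$, whereas you rerun the symmetric estimates explicitly.
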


\begin{proof} Indeed, consider $f\in {\mathcal F}_{m,M}$ and $x\in[0,1]$. We distinguish two cases :\bg
\parindent20pt
\begin{enumeratei}
\item   $x\in\left[0, \frac{-m}{M-m}\right]$. Since $f(t)\leq M$ for $t\in[0,x]$ we deduce
that
\[
J(f)(x)=\int_0^xf(t)\,dt\leq M x\leq \frac{-mM}{M-m}.
\]
\item  $x\in\left[\frac{-m}{M-m},1\right]$. Here we have $-f(t)\leq -m$ for $t\in[x,1]$ so
\[
J(f)(x)=\int_x^1(-f)(t)\,dt\leq -m(1-x)\leq \frac{-mM}{M-m}.
\]
\end{enumeratei}
So we have shown that for every $f\in {\mathcal F}_{m,M}$ we have
\begin{equation}\label{E:eq7}
\forall\,x\in[0,1],\qquad J(f)(x)\leq \frac{-mM}{M-m}.
\end{equation}

Applying \eqref{E:eq7} to $-f\in{\mathcal F}_{-M,-m}$ we conclude also that
\begin{equation}\label{E:eq8}
\forall\,x\in[0,1],\qquad -J(f)(x)\leq \frac{-mM}{M-m}.
\end{equation}
Now, from \eqref{E:eq7}  and \eqref{E:eq8}, we arrive to the conclusion that
\[
\forall\,x\in[0,1],\qquad \abs{J(f)(x)}\leq \frac{-mM}{M-m},
\]
as desired.
\end{proof}
\bg
\qquad The next lemma is a well-known result on convex functions, (See for example \cite[Ch 4]{gar}.)  But since
its statement is somehow unusual, we will include a proof for the convenience of the reader.
\bg
\begin{lemma}\label{lm1}
\sl Let $\vf:[0,T]\vers\reel$ be a monotonous increasing function which is not constant on $(0,T)$. 
For $t\in(0,T]$ we define $K(\vf,t)$ by
$$K(\vf,t)=\frac{1}{t}\int_0^t\vf(x)\,dx.$$
Then, for all $t\in(0,T)$ we have $K(\vf,t)<K(\vf,T)$.
\end{lemma}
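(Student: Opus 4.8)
The plan is to compare the average of $\vf$ over $[0,t]$ with its average over the whole interval by isolating the contribution of the tail $[t,T]$, where $\vf$ takes its larger values. First I would record that a monotone function on a compact interval is bounded and Riemann integrable, so $K(\vf,t)$ is well defined for every $t\in(0,T]$. Then, fixing $t\in(0,T)$, I would introduce the two averages
$$A=K(\vf,t)=\frac{1}{t}\int_0^t\vf(x)\,dx,\qquad B=\frac{1}{T-t}\int_t^T\vf(x)\,dx,$$
and split $\int_0^T=\int_0^t+\int_t^T$ to obtain the clean identity
\begin{equation*}
K(\vf,T)-K(\vf,t)=\frac{T-t}{T}\,(B-A).
\end{equation*}
This reduces the entire statement to proving the strict inequality $A<B$.

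Next I would establish the non-strict inequality $A\leq B$. Since $\vf$ is increasing, one has $\vf(x)\leq\vf(t)$ for $x\in[0,t]$ and $\vf(x)\geq\vf(t)$ for $x\in[t,T]$; integrating these two pointwise bounds and dividing by $t$ and $T-t$ respectively gives $A\leq\vf(t)\leq B$. Combined with the identity above, this already yields $K(\vf,t)\leq K(\vf,T)$.

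The delicate point — and the part I expect to be the main obstacle — is upgrading this to a \emph{strict} inequality using only that $\vf$ is not constant on $(0,T)$, since $\vf$ may be constant on subintervals or have jumps. I would argue by contradiction: if $A=B$, then the chain $A\leq\vf(t)\leq B$ collapses, forcing $A=\vf(t)=B$. Hence $\int_0^t\bigl(\vf(t)-\vf(x)\bigr)\,dx=0$ and $\int_t^T\bigl(\vf(x)-\vf(t)\bigr)\,dx=0$, each with a nonnegative integrand, so $\vf(x)=\vf(t)$ for almost every $x\in(0,T)$. The remaining subtlety is to promote this almost-everywhere statement to genuine constancy on $(0,T)$: for any interior point $x_0$ one can find, arbitrarily close on each side, points where $\vf$ equals $\vf(t)$, and monotonicity then squeezes $\vf(x_0)$ between these equal values, so $\vf(x_0)=\vf(t)$. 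This contradicts the hypothesis that $\vf$ is not constant on $(0,T)$. Therefore $A<B$, and the identity gives $K(\vf,t)<K(\vf,T)$, as claimed.
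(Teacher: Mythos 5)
Your proof is correct, and it takes a genuinely different route from the paper's. The paper compares $K(\vf,\alpha T)$ with $K(\vf,T)$ via the change of variables $x=\alpha u$, which reduces everything to the single pointwise inequality $\vf(u)\geq\vf(\alpha u)$ on $[0,T]$; the price is paid in the equality analysis, where the paper must iterate the scaling: the full-measure set $\mathcal{S}$ where $\vf(u)=\vf(\alpha u)$ is intersected over all powers $\alpha^{-n}\mathcal{S}$ to produce a dense set on which $\vf\equiv\vf(0^+)$, and only then does monotonicity force constancy on $(0,T)$. You instead split the integral at $t$, which is the classical ``chord slope'' argument (the lemma is really the statement that $t\mapsto\frac{1}{t}\int_0^t\vf$ is increasing, i.e.\ that chords of the convex function $t\mapsto\int_0^t\vf$ have increasing slope): the identity $K(\vf,T)-K(\vf,t)=\frac{T-t}{T}(B-A)$ reduces the claim to comparing head and tail averages, and the squeeze $A\leq\vf(t)\leq B$ gives the weak inequality in two lines. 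Your equality analysis is also simpler and more robust: $A=B$ forces $\vf=\vf(t)$ almost everywhere on $(0,T)$, and you correctly identify and handle the one subtle point, namely promoting an almost-everywhere constancy to genuine constancy using monotonicity and density of a full-measure set (a step a careless writer would skip, since monotone functions can in principle differ from a constant on a null set). In short, both proofs are valid; yours is more elementary and has a cleaner case-of-equality argument, while the paper's substitution trick is slicker for the weak inequality but leads to a more delicate rigidity argument.
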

\bg
\begin{proof} 
Indeed, for $\alpha\in (0,1)$ we have
\[
K(\vf,\alpha T)=\frac{1}{\alpha T}\int_0^{\alpha T}\vf(x)\,dx=\frac{1}{T}\int_0^T\vf(\alpha u)\,du.
\]
So, if $0<\alpha<1$ then
\[
K(\vf,T)-K(\vf,\alpha T)=\frac{1}{T}\int_0^T\big(\vf(u)-\vf(\alpha  u)\big)\,du\geq0.
\]
The last inequality follows from the fact that $u\mapsto\big(\vf(u)-\vf(\alpha  u)\big)$ 
is nonnegative on $[0,T]$ 
because $\vf$ is increasing.

\bg
\qquad
Now suppose that we have $K(\vf,T)=K(\vf,\alpha T)$ for some $\alpha\in(0,1)$. This implies that the set
\[
{\mathcal S}=\{u\in [0,T]:\vf(u)=\vf(\alpha  u)\}
\]
has Lebesgue measure equal to $\lambda([0,T])=T$. It follows that the set
\[
{\mathcal S}^\prime=\bigcap_{n\geq 1}\big(\alpha^{-n}{\mathcal S}\big)
\]
 has also
 Lebesgue measure equal to $T$. In particular, ${\mathcal S}^\prime$ is a dense subset of $(0,T)$. Now, consider $u\in {\mathcal S}^\prime$.
We have $\vf(\alpha^ku)=\vf(\alpha^{k+1}u)$ for every $k\geq0$. Thus, for every $k\geq0$ we have
$\vf(u)=\vf(\alpha^ku)$, so letting $k$ tend to $+\infty$ we obtain
$\vf(u)=\vf(0^+)$. Since ${\mathcal S}^\prime$ is a dense subset of $(0,T)$, there is an increasing sequence $(u_n)_{n\geq0}$ in ${\mathcal S}^\prime$
that converges to $T$, thus $\vf(0^+)=\lim_{n\to\infty}\vf(u_n)=\vf(T^-)$. This means that
$\vf$ is constant on $(0,T)$ which is contrary to the hypothesis. So we must have $K(\vf,T)>K(\vf,\alpha T)$ for
every $\alpha\in(0,1)$ and the proof of the Lemma is complete.
\end{proof}

\bg
The next theorem is the main result of this note :
\bg
\begin{theorem}\label{th1}
Let $\vf$ be a positive monotone increasing function on $:\left[0,\frac{-mM}{M-m}\right]$. For every $f\in{\mathcal F}_{m,M}$ we have
\[
\int_0^1\vf(\abs{J(f)(x)})dx\leq K\left(\vf, \frac{-mM}{M-m}\right),
\]
where $K(\,.\, ,\,.\,)$ is defined in Lemma \ref{lm1}. Moreover,  if $\vf$ is not constant on $\left(0,\frac{-mM}{M-m}\right)$, then
equality holds if and only if $f$ co\"\i nsides for almost every $x$ in $[0,1]$ with one of the functions $f_0$ or $f_1$ defined  by
\[
f_0(x)=\left\{
\begin{matrix}
M&\hbox{ if }&x\in\left[0,\frac{-m}{M-m}\right),\\
m&\hbox{ if }&x\in\left[\frac{-m}{M-m},1\right].
\end{matrix}
\right.\qquad
f_1(x)=\left\{
\begin{matrix}
m&\hbox{ if }&x\in\left[0,\frac{M}{M-m}\right),\cr
M&\hbox{ if }&x\in\left[\frac{M}{M-m},1\right].
\end{matrix}
\right.
\]
\end{theorem}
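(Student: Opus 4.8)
The plan is to reduce the whole statement to a single pointwise bound on the distribution function of $\abs{J(f)}$ and then recover the integral inequality by a layer-cake argument. Writing $c=\frac{-mM}{M-m}$ and $h=J(f)$, which by the Proposition satisfies $\abs{h}\le c$ with $h(0)=h(1)=0$, the central claim I would isolate is the level-set inequality
\[
\forall\,\lambda\in(0,c),\qquad \abs{\{x\in[0,1]:\abs{h(x)}>\lambda\}}\le 1-\frac{\lambda}{c}.
\]
A direct substitution shows that $J(f_0)$ is the ``tent'' climbing with slope $M$ up to the value $c$ at $x=\frac{-m}{M-m}$ and descending with slope $m$ back to $0$, so that $\abs{J(f_0)}$ pushes Lebesgue measure forward to the \emph{uniform} measure on $[0,c]$; consequently $\int_0^1\vf(\abs{J(f_0)(x)})\,dx=\frac1c\int_0^c\vf=K(\vf,c)$, and likewise for $f_1$. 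Granting the level-set inequality, I would write, for increasing $\vf$, $\vf(\abs{h(x)})=\vf(0^+)+\int_{(0,\abs{h(x)}]}d\vf(\lambda)$ and apply Fubini to obtain
\[
\int_0^1\vf(\abs{h(x)})\,dx=\vf(0^+)+\int_{(0,c]}\abs{\{\abs{h}\ge\lambda\}}\,d\vf(\lambda).
\]
Since the same bound $1-\frac{\lambda}{c}$ controls $\abs{\{\abs h\ge\lambda\}}$ (take left limits) and $d\vf\ge0$, the right-hand side is at most $\vf(0^+)+\int_{(0,c]}(1-\frac{\lambda}{c})\,d\vf(\lambda)=K(\vf,c)$, which is the asserted inequality.

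To prove the level-set inequality I would bound the complementary band $S=\{x:\abs{h(x)}\le\lambda\}$ from below by $\lambda/c$. Splitting $S$ according to the sign of $h'=f$ and using $m\le f\le M$ gives
\[
\abs{S}\ge\abs{S\cap\{f>0\}}+\abs{S\cap\{f<0\}}\ge\frac1M\int_S f_+ +\frac1{-m}\int_S f_-,
\]
where $\int_S f_+$ and $\int_S f_-$ are the total rise and the total fall of $h$ inside the band. To see that each is at least $\lambda$ (whenever $\{\abs h>\lambda\}\ne\emptyset$), I would pass to the clamped function $\bar h=\min(\lambda,\max(-\lambda,h))$: it starts and ends at $0$, stays in $[-\lambda,\lambda]$, attains one of the values $\pm\lambda$, and its rise and fall are dominated by $\int_S f_+$ and $\int_S f_-$ respectively; a function with these properties must have total rise $\ge\lambda$ and total fall $\ge\lambda$. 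Combining, $\abs{S}\ge\frac{\lambda}{M}+\frac{\lambda}{-m}=\frac{\lambda}{c}$, since $\frac1c=\frac1M+\frac1{-m}$, which is exactly the claim.

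For the cases of equality I would trace back through these two reductions. Equality in the integral bound forces $\abs{\{\abs h>\lambda\}}=1-\frac{\lambda}{c}$ for $d\vf$-almost every $\lambda\in(0,c)$; when $\vf$ is strictly increasing this holds on a dense set, and since both sides are monotone in $\lambda$ with the right-hand side continuous and strictly decreasing, the distribution function equals $1-\frac{\lambda}{c}$ for \emph{every} $\lambda\in(0,c)$. Equality at a given level $\lambda$ then turns every estimate of the second paragraph into an equality: $f=M$ wherever $h$ ascends inside the band, $f=m$ wherever it descends, and $\{\abs h>\lambda\}$ is a single interval on which $h$ keeps one sign. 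Letting $\lambda\uparrow c$ one concludes that $h$ increases with slope $M$ up to a single maximum equal to $c$ and then decreases with slope $m$ (the strict monotonicity $K(\vf,t)<K(\vf,c)$ of Lemma~\ref{lm1} is what guarantees that the comparison is strict when the profile is not this extremal one), so that $h=J(f_0)$ if this maximum is positive and $h=J(f_1)$ if it is negative; hence $f=f_0$ or $f=f_1$ almost everywhere.

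The hard part will be the equality analysis rather than the inequality. The inequality is essentially forced once the clamping trick is in place, but upgrading ``equality for $d\vf$-almost every level'' to ``equality at enough levels to reconstruct $h$'' is delicate and genuinely uses that $\vf$ increases on a dense subset of $(0,c)$: for a purely atomic $\vf$ (a single jump at some $\lambda_0$) the integral only sees $\abs{\{\abs h>\lambda_0\}}$, and one can exhibit functions other than $f_0,f_1$ attaining equality (e.g.\ a profile that climbs with slope $M$, wanders with $h>\lambda_0$ using an arbitrary admissible excursion, then descends with slope $m$), so the stated characterization really needs $\vf$ strictly increasing and I would flag this hypothesis explicitly. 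Alongside this, I would have to be careful with the measure-theoretic bookkeeping — the distinction between $\{\abs h>\lambda\}$ and $\{\abs h\ge\lambda\}$, and null sets where $f=0$ inside the band — which is routine but must be handled to make the rigidity argument airtight.
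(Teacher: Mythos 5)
Your proposal is essentially correct for the inequality and takes a genuinely different route from the paper. The paper decomposes the open set $\{x:J(f)(x)\neq 0\}$ into countably many intervals $(a_n,b_n)$ on which $J(f)$ has constant sign and vanishes at the endpoints, dominates $\abs{J(f)}$ there by the tent $\min\bigl(M(x-a_n),m(x-b_n)\bigr)$ (or its mirror image), integrates $\vf$ of the tent exactly to get $\abs{I_n}\,K\bigl(\vf,\tfrac{-mM\abs{I_n}}{M-m}\bigr)$, and then needs Lemma \ref{lm1} to compare these averages at different scales with $K\bigl(\vf,\tfrac{-mM}{M-m}\bigr)$. You replace all of this by the single distributional estimate $\abs{\{\abs{J(f)}>\lambda\}}\le 1-\lambda/c$, $c=\tfrac{-mM}{M-m}$ (your clamping/rise-and-fall proof of it is sound: the rise and fall of the truncated function are at least $\lambda$ each, and are paid for at rates at most $M$ and $-m$ inside the band), followed by the layer-cake identity, which gives the integral bound for every increasing $\vf$ at once and makes Lemma \ref{lm1} unnecessary. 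This is a legitimate, arguably more modular, proof of the inequality, and it yields the corollaries just as directly.

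Two further points about the equality case. First, your warning about the hypothesis is not a quibble: it exposes an actual error in the paper. The paper's claim that equality in \eqref{E:eq12} holds \emph{only if} $J(f)(x)=\min\bigl(M(x-a_n),m(x-b_n)\bigr)$ on $I_n$ tacitly uses injectivity of $\vf$, which does not follow from ``monotone increasing and not constant on $(0,c)$.'' Concretely, take $m=-1$, $M=1$ (so $c=1/2$), let $\vf=A$ on $[0,1/4]$ and $\vf=B>A$ on $(1/4,1/2]$, and let $f$ take the values $1,-1,1,-1$ on $[0,3/8)$, $[3/8,1/2)$, $[1/2,5/8)$, $[5/8,1]$: then $f\in{\mathcal F}_{-1,1}$, $\int_0^1\vf(\abs{J(f)(x)})\,dx=\tfrac12(A+B)=K(\vf,1/2)$, yet $f$ coincides a.e.\ with neither $f_0$ nor $f_1$. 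So the characterization really does require $\vf$ strictly increasing (or at least non-constant on every interval $(c-\delta,c)$), exactly as you flagged. Second, one intermediate claim in your own rigidity argument is too strong: equality at a \emph{single} level $\lambda$ does not force $\{\abs{J(f)}>\lambda\}$ to be one interval --- your counterexample itself has two components separated by a point. The clean repair is the limit you already invoke: equality at levels $\lambda_n\uparrow c$ gives $\abs{\{\abs{J(f)}\ge\lambda_n\}}=1-\lambda_n/c>0$, hence $\sup\abs{J(f)}=c$, attained at some $x^*$ by continuity; then the two estimates in the paper's Proposition become equalities, forcing $x^*=\tfrac{-m}{M-m}$ and $f=f_0$ a.e.\ (if $J(f)(x^*)=c$), or $x^*=\tfrac{M}{M-m}$ and $f=f_1$ a.e.\ (if $J(f)(x^*)=-c$), with no component-counting at intermediate levels needed.
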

\bg
\begin{proof}
 Since $f$ is integrable, $J(f)$ is continuous on $[0,1]$. If $J(f)=0$,
 ({\it i.e.} $f=0~a.e.$,) there is nothing to be proved. So,
in what follows we will suppose that $J(f)\ne 0$.\bg
\qquad The continuity of $J(f)$  shows that the set ${\mathcal O}=\{x\in(0,1): J(f)(x)\ne 0\}$
is an open set. Moreover, since $J(f)(0)=J(f)(1)=0$, we see that $J(f)(t)=0$ for every $t\in[0,1]\setminus{\mathcal O}$. 
\bg
\qquad The open set ${\mathcal O}$ is the union of at most denumbrable family of disjoint open
intervals. Thus there exist ${\mathcal N}\subset\nat$ and a family $(I_n)_{n\in{\mathcal N}}$ of non-empty
{\it disjoint} open sub-intervals of $(0,1)$  such that  ${\mathcal O}=\cup_{n\in{\mathcal N}}I_n$.\par
\qquad Suppose that $I_n=(a_n,b_n)$. Since $a_n$ and $b_n$ belong to $[0,1]\setminus{\mathcal O}$, we conclude
that $J(f)(a_n)=J(f)(b_n)=0$, while $J(f)$ keeps a constant sign on $I_n$.
So, let us consider two cases :
\bg
\begin{enumeratea}
\item  $J(f)(x)>0$ for $x\in I_n$. From the inequality $m\leq f\leq M$ we conclude that, 
for $x\in I_n$, we have
\begin{equation}\label{E:eq9}
J(f)(x)=J(f)(x)-J(f)(a_n)=\int_{a_n}^xf(t)\,dt\leq M(x-a_n)
\end{equation}
and
\begin{equation}\label{E:eq10}
J(f)(x)=-(J(f)(b_n)-J(f)(x))=\int_x^{b_n}(-f)(t)\, dt\leq-m(b_n-x)=m(x-b_n).
\end{equation}
Combining \eqref{E:eq9} and \eqref{E:eq10} we obtain
\[
\forall\,x\in I_n,\qquad 0<J(f)(x)\leq \min(M(x-a_n),m(x-b_n)),
\]
and consequently, using the definition of $K(\,.\, ,\,.\,)$ from Lemma \ref{lm1}, we obtain
\begin{align}\label{E:eq12}
\int_{I_n}\vf(\abs{J(f)(x)})\,dx&\leq\int_{a_n}^{b_n}\vf\big(\min(M(x-a_n),m(x-b_n))\big)\,dx\notag\\
&= \int_{a_n}^{a_n-m(b_n-a_n)/(M-m)}\vf(M(x-a_n))dx+\notag\\
&\qquad\int_{b_n-M(b_n-a_n)/(M-m)}^{b_n}\vf(m(x-b_n))\,dx\notag\\
&= \frac{1}{M}\int_0^{-mM(b_n-a_n)/(M-m)}\vf(t)dt+\frac{1}{-m}\int_0^{M(b_n-a_n)/(M-m)}\vf(t)dt\notag\\
&= (b_n-a_n)K\left(\vf,\frac{-mM(b_n-a_n)}{M-m}\right)
\end{align}
with equality if and only if $J(f)(x)= \min(M(x-a_n),m(x-b_n))$ for every $x\in I_n$, that is, if and only if,
$f(x)=M$ for almost every $x\in\left[a_n,\frac{Ma_n-mb_n}{M-m}\right)$, and 
$f(x)=m$ for almost every $x\in\left[\frac{Ma_n-mb_n}{M-m},b_n\right]$.
\bg
\item  $J(f)(x)<0$ for $x\in I_n$. From $m\leq f\leq M$ we conclude that, for $x\in I_n$, we have
\begin{equation}\label{E:eq13}
J(f)(x)=J(f)(x)-J(f)(a_n)=\int_{a_n}^xf(t)\,dt\geq m(x-a_n)
\end{equation}
and
\begin{equation}\label{E:eq14}
J(f)(x)=-(J(f)(b_n)-J(f)(x))=\int_x^{b_n}(-f)(t)\, dt\geq -M(b_n-x).
\end{equation}
Again, combining \eqref{E:eq13} and \eqref{E:eq14} we get
\[
\forall\,x\in I_n,\qquad 0<-J(f)(x)\leq \min(-m(x-a_n),M(b_n-x)),
\]
and consequently
\begin{align}\label{E:eq15}
\int_{I_n}\vf(\abs{J(f)(x)})\,dx&\leq\int_{a_n}^{b_n}\vf\big(\min(m(a_n-x),M(b_n-x))\big)\,dx\notag\\
&= \int_{a_n}^{a_n+M(b_n-a_n)/(M-m)}\vf(m(a_n-x))dx\notag\\
&\qquad+\int_{b_n+m(b_n-a_n)/(M-m)}^{b_n}\vf(M(b_n-x))\,dx\notag\\
&= \frac{1}{-m}\int_0^{-mM(b_n-a_n)/(M-m)}\vf(t)dt+\frac{1}{M}\int_0^{-mM(b_n-a_n)/(M-m)}\vf(t)dt\notag\\
&=(b_n-a_n)K\left(\vf,\frac{-mM(b_n-a_n)}{M-m}\right),
\end{align}
with equality if and only if $J(f)(x)= \max(m(x-a_n),M(x-b_n))$ for every $x\in I_n$, that is, if and only if,
$f(x)=m$ for almost every $x\in\left[a_n,\frac{Mb_n-ma_n}{M-m}\right)$, and 
$f(x)=M$ for almost every $x\in\left[\frac{Mb_n-ma_n}{M-m},b_n\right]$.
\end{enumeratea}

So, comparing \eqref{E:eq12} and \eqref{E:eq15}  we see that in both cases we have
\[
\int_{I_n}\vf(\abs{J(f)(x)})\,dx\leq \abs{I_n}\cdot K\left(\vf,\frac{-mM\abs{I_n}}{M-m}\right).
\]
Therefore, using Lemma \ref{lm1}, we can write

\begin{align*}
\int_0^1\vf(\abs{J(f)(x)})\,dx&=\sum_{n\in{\mathcal N}}\int_{I_n}\vf(\abs{J(f)(x)})\,dx\leq
\sum_{n\in{\mathcal N}}\abs{I_n}\cdot K\left(\vf,\frac{-mM\abs{I_n}}{M-m}\right)\\
&\leq K\left(\vf,\frac{-mM}{M-m}\right)\sum_{n\in{\mathcal N}}\abs{I_n}=
 K\left(\vf,\frac{-mM}{M-m}\right) \, \abs{O}\\
&\leq  K\left(\vf,\frac{-mM}{M-m}\right)
\end{align*}
which is the desired inequality.
\bg
\qquad Moreover, analyzing the case of equality, and using Lemma \ref{lm1}, we see  that it can occur if and only if ${\mathcal O}=(0,1)$
and $f(x)=f_0(x)~a.e.$ or $f(x)=f_2(x)~a.e.$, where $f_0$ and $f_1$ are the functions defined in the statement of the Theorem.
This concludes the proof.
\end{proof}
\bg
\qquad Let us give some corollaries. For a positive real $p$ and a  funtion $f:$ from ${\mathcal L}^\infty([0,1])$ we recall the notation
\[
\N{f}_p=\left(\int_0^1\abs{f(x)}^p\, dx\right)^{1/p}.
\]
The following corollary gives sharp bounds on $\N{J(f)}_p$ when $f\in{\mathcal F}_{m,M}$. This generalizes
the inequalities from \cite{kou} (corresponding to $p=2$) and \cite{thong} (corresponding to $p=1$).
\bg
\begin{corollary}\label{cor1}
Let $p$ be a positive real number. Then, for every $f\in{\mathcal F}_{m,M}$ we have
\[
\N{J(f)}_p\leq \frac{1}{(p+1)^{1/p}}\cdot \frac{-mM}{M-m},
\]
with equality if and only if $f$ co\"\i nsides for almost every $x$ in $[0,1]$ with one of the functions $f_0$ or $f_1$ defined  by
\[
f_0(x)=\left\{
\begin{matrix}
M&\hbox{ if }&x\in\left[0,\frac{-m}{M-m}\right),\\
m&\hbox{ if }&x\in\left[\frac{-m}{M-m},1\right].
\end{matrix}
\right.\qquad
f_1(x)=\left\{
\begin{matrix}
m&\hbox{ if }&x\in\left[0,\frac{M}{M-m}\right),\cr
M&\hbox{ if }&x\in\left[\frac{M}{M-m},1\right].
\end{matrix}
\right.
\] 
\end{corollary}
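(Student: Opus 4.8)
The plan is to apply Theorem \ref{th1} directly, with the specific choice $\vf(t)=t^p$. First I would verify that this $\vf$ satisfies the hypotheses of the theorem: on the interval $\left[0,\frac{-mM}{M-m}\right]$ the function $t\mapsto t^p$ is positive, monotone increasing, and (since $p>0$) not constant on the open interval $\left(0,\frac{-mM}{M-m}\right)$. Thus both the inequality and the equality clause of Theorem \ref{th1} are available for this $\vf$.

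Next I would compute $K\left(\vf,\frac{-mM}{M-m}\right)$ explicitly. Writing $T=\frac{-mM}{M-m}$, an elementary integration gives
\[
K(\vf,T)=\frac{1}{T}\int_0^T t^p\,dt=\frac{1}{T}\cdot\frac{T^{p+1}}{p+1}=\frac{T^p}{p+1}.
\]
Feeding this into the conclusion of Theorem \ref{th1} yields
\[
\int_0^1\abs{J(f)(x)}^p\,dx\leq K(\vf,T)=\frac{1}{p+1}\left(\frac{-mM}{M-m}\right)^p,
\]
and raising both sides to the power $1/p$ produces exactly the claimed bound $\N{J(f)}_p\leq\frac{1}{(p+1)^{1/p}}\cdot\frac{-mM}{M-m}$.

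Finally, the equality characterization is inherited verbatim from Theorem \ref{th1}: because $\vf(t)=t^p$ is not constant on $\left(0,\frac{-mM}{M-m}\right)$, the theorem guarantees that equality holds if and only if $f$ coincides for almost every $x$ in $[0,1]$ with one of the two extremal functions $f_0$ or $f_1$, which are precisely the functions displayed in the statement of the corollary.

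I do not anticipate any genuine obstacle here, since the corollary is a direct specialization of the main theorem. The only points requiring any care are confirming that $t^p$ is increasing and non-constant for $p>0$ (so that the sharpness part of Theorem \ref{th1} applies rather than just the inequality), and carrying out the short computation of $K(\vf,T)$; the passage from the bound on $\int_0^1\abs{J(f)(x)}^p\,dx$ to the bound on $\N{J(f)}_p$ is just taking a $p$-th root, which preserves the equality cases since $t\mapsto t^{1/p}$ is strictly increasing.
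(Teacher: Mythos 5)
Your proposal is correct and is exactly the paper's approach: the paper proves this corollary in one line by applying Theorem \ref{th1} with $\vf(x)=x^p$, and your write-up simply fills in the routine details (computing $K\left(\vf,\frac{-mM}{M-m}\right)=\frac{1}{p+1}\left(\frac{-mM}{M-m}\right)^p$, taking $p$-th roots, and inheriting the equality cases). No gaps.
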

\begin{proof}
This follows from Theorem \ref{th1}, by choosing $\vf(x)=x^p$.
\end{proof}
\bg
\qquad Applying Theorem \ref{th1}, to the function $\vf_\eps(x)=\log(\eps+x)$ for $\eps>0$, and then letting $\eps$
tend to $0$ we obtain the following corollary :\bg
\begin{corollary}\label{cor2}
For every $f\in{\mathcal F}_{m,M}$ we have
\[
\exp\left(\int_0^1\log\abs{J(f)(x)}\,dx\right)\leq \frac{1}{e}\cdot \frac{-mM}{M-m}.
\]
\end{corollary}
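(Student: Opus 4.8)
The plan is to feed Theorem~\ref{th1} a family of logarithmic weights and then let a regularization parameter vanish. Write $T=\frac{-mM}{M-m}$ and fix $f\in{\mathcal F}_{m,M}$; the Proposition already gives $\abs{J(f)(x)}\le T$ for every $x$, which will furnish the integrable majorant needed at the end. For each $\eps>0$ I would apply the theorem not to $\log(\eps+\cdot)$ itself, which is negative near $0$ and so violates the positivity hypothesis, but to the shifted weight
\[
\vf_\eps(x)=\log(\eps+x)-\log\eps=\log\!\left(1+\frac{x}{\eps}\right),\qquad x\in\left[0,T\right],
\]
which is non-negative, vanishes at $0$, and is strictly increasing, hence admissible. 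Theorem~\ref{th1} gives $\int_0^1\vf_\eps(\abs{J(f)(x)})\,dx\le K(\vf_\eps,T)$, and since the additive constant $\log\eps$ contributes equally to both sides it cancels, leaving
\[
\int_0^1\log(\eps+\abs{J(f)(x)})\,dx\le \frac1T\int_0^T\log(\eps+x)\,dx .
\]

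With this inequality available for every $\eps>0$, I would send $\eps\to0^+$. On the right, dominated convergence together with the elementary value $\int_0^T\log x\,dx=T\log T-T$ gives
\[
\frac1T\int_0^T\log(\eps+x)\,dx\ \vers\ \frac1T\int_0^T\log x\,dx=\log T-1=\log\frac Te .
\]
On the left, as $\eps$ decreases to $0$ the integrand $\log(\eps+\abs{J(f)(x)})$ decreases pointwise to $\log\abs{J(f)(x)}$, while for $\eps\le1$ it is bounded above by the constant $\log(1+T)$; the monotone convergence theorem for decreasing sequences (with limit possibly $-\infty$) then yields $\int_0^1\log(\eps+\abs{J(f)(x)})\,dx\to\int_0^1\log\abs{J(f)(x)}\,dx$. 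Passing to the limit gives $\int_0^1\log\abs{J(f)(x)}\,dx\le\log\frac Te$, and exponentiating produces the stated bound.

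The delicate point is exactly this last interchange of limit and integral. Since $J(f)$ is continuous and vanishes at the endpoints, and possibly on a set of positive measure (for instance when $f\equiv0$), $\log\abs{J(f)}$ may equal $-\infty$ on a nonnegligible set, so $\int_0^1\log\abs{J(f)}\,dx$ can be $-\infty$; the inequality is then read with the convention $\exp(-\infty)=0$ and holds trivially, but the limiting argument must stay valid in this degenerate regime. This is why I regularize with $\eps$ and invoke monotone convergence against the majorant $\log(1+T)$ rather than integrating $\log\abs{J(f)}$ directly. The only remaining verification is the harmless one that $\vf_\eps$ satisfies the hypotheses of Theorem~\ref{th1}, in particular $\vf_\eps(0)=0$, so that the zero set of $J(f)$ contributes nothing to $\int_0^1\vf_\eps(\abs{J(f)})\,dx$.
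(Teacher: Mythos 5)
Your proof is correct and follows essentially the same route as the paper, whose entire argument is the one-line remark that one should apply Theorem \ref{th1} to $\vf_\eps(x)=\log(\eps+x)$ and let $\eps$ tend to $0$. In fact your write-up is more careful than the paper's: the shift by $-\log\eps$ repairs the fact that $\log(\eps+x)$ itself is negative near $0$ (so it does not literally satisfy the positivity hypothesis of Theorem \ref{th1}, and $\vf_\eps(0)=0$ is what makes the zero set of $J(f)$ harmless in the theorem's proof), and the monotone-convergence justification of the passage to the limit, including the degenerate case $\int_0^1\log\abs{J(f)(x)}\,dx=-\infty$, is exactly the detail the paper leaves to the reader.
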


\begin{remark}
 Note that Corollary \ref{cor2} follows also from Corollary \ref{cor1} by letting $p$ tend to $0$.
\end{remark}
\bg

%%%%%%%%%%%%%%%%%%%%%%%%%%%%%%%%%%

\end{document}